\documentclass[11pt,a4paper]{article}
\usepackage[T1]{fontenc}
\usepackage[utf8]{inputenc}
\usepackage{lmodern}
\usepackage[margin=1in]{geometry}
\usepackage{amsmath,amssymb,amsthm}
\usepackage{microtype}
\usepackage{tikz}
\usetikzlibrary{arrows.meta,positioning}
\usepackage[hidelinks]{hyperref}
\hypersetup{
 pdftitle={Small and large quasi-kernels: a weighted transfer theorem},
 pdfauthor={Gregory Gutin, Xiangzhou Liu and Xianhao Meng},
 pdfsubject={Weighted quasi-kernel transfer, universal equivalence, and break digraphs},
 pdfkeywords={quasi-kernel, small quasi-kernel, large quasi-kernel, weighted neighbourhood, matching, break digraph}
}
\newtheorem{theorem}{Theorem}[section]
\newtheorem{lemma}[theorem]{Lemma}
\newtheorem{corollary}[theorem]{Corollary}
\newtheorem{conjecture}[theorem]{Conjecture}
\newcommand{\gq}{\gamma_2}
\newcommand{\Znonneg}{\mathbb{Z}_{\ge 0}}

\title{Small and large quasi-kernels:\protect\\a weighted transfer theorem}
\author{Jiangdong Ai$^1$ \quad Gregory Gutin$^2$ \quad Xiangzhou Liu$^1$ \quad Xianhao Meng$^1$\\[4pt]
\small $^1$School of Mathematical Sciences and LPMC, Nankai University, China\\
\small $^2$Department of Computing, Security and Mathematics, Royal Holloway University of London, UK}
\begin{document}
\maketitle

\begin{abstract}
A quasi-kernel of a digraph $D$ is an independent set from which every
vertex is reachable by a directed path of length at most two. The Small
Quasi-kernel Conjecture asserts that every source-free digraph on $n$
vertices has a quasi-kernel of size at most $n/2$, whereas the Large
Quasi-kernel Conjecture asks for a quasi-kernel whose closed
out-neighbourhood has size at least $n/2$. Spiro proved that the former
implies the latter. In this paper, we establish a weighted transfer
theorem: if every nonnegative integer weighting of a source-free digraph
$D$ admits a quasi-kernel whose closed out-neighbourhood contains at
least a $\theta$-fraction of the total weight, then $D$ has a
quasi-kernel of size at most
$\lfloor(1-\theta)\alpha(D)+\theta\mu\rfloor$ for $0<\theta\le1/2$, where
$\mu$ is the maximum matching size between a maximum independent set
and its complement.
Using the independent blow-up construction of Spiro, we deduce that the
universal small and large quasi-kernel assertions are equivalent for
every such $\theta$, with no loss in the parameter. In particular, the
two conjectures are equivalent. As an application, we prove the
half-order bound for source-free break digraphs.
\end{abstract}

\section{Introduction}\label{sec:intro}

Throughout the paper, all digraphs are finite without loops or parallel arcs;
opposite arcs are allowed. A digraph with no pair of opposite arcs is
called an \emph{oriented graph}. For $X\subseteq V(D)$, let
\[
 N_D^+(X)=\{y\in V(D)\setminus X: x\to y\text{ for some }x\in X\},
 \qquad N_D^+[X]=X\cup N_D^+(X).
\]
We write $N_D^{\le2}[X]$ for the vertices reachable from $X$ by a
directed path of length at most two. A set $X\subseteq V(D)$ is \emph{independent}
if there is no arc between any pair of vertices in $X.$ 
An independent set $Q$ is a \emph{quasi-kernel}\footnote{We follow the convention that paths start in the quasi-kernel, not end there as in some other papers. Results stated with the reverse convention apply
after reversing all arcs.}  
of $D$ if $N_D^{\le2}[Q]=V(D)$. 
Chv\'atal and Lov\'asz~\cite{CL} proved that
every digraph has a quasi-kernel.

A vertex of indegree zero is called a \emph{source}; a digraph is
\emph{source-free} if it contains no source. Erd\H{o}s and
Sz\'ekely proposed the following conjecture; see e.g. the survey~\cite{Survey}.
\begin{conjecture}[Small Quasi-kernel Conjecture]\label{conj:small}
Every source-free digraph $D$ has a quasi-kernel $Q$ such that
$|Q|\le |V(D)|/2$.
\end{conjecture}
The source-free assumption is necessary, since every source belongs to
every quasi-kernel. The constant $1/2$ is best possible: consider
the directed 4-cycle. One approach to settle Conjecture \ref{conj:small} 
is to find two disjoint quasi-kernels, although this stronger property does not hold for all
source-free digraphs~\cite{GKTY}. It does hold for several special
classes, including semicomplete multipartite, quasi-transitive and
locally semicomplete digraphs~\cite{HH}. Other results establish the
half-order bound for source-free digraphs whose underlying graphs are
4-colourable~\cite{KLS}, for source-free digraphs admitting a
kernel~\cite{Hulst}, and for certain classes of split
digraphs~\cite{AGGYZ}. For further developments we refer to~\cite{Erdos,Survey}.

Spiro~\cite{Spiro} introduced a complementary way to measure the size
of a quasi-kernel, namely by the size of its closed out-neighbourhood.
\begin{conjecture}[Large Quasi-kernel Conjecture]\label{conj:large}
Every digraph $D$ has a quasi-kernel $Q$ such that
$|N_D^+[Q]|\ge |V(D)|/2$.
\end{conjecture}
Notice that Conjecture~\ref{conj:large} has no source-free assumption.
Spiro~\cite[Proposition~4.1]{Spiro} proved that the Small Quasi-kernel
Conjecture implies the Large Quasi-kernel Conjecture. Ai, Liu and
Peng~\cite{ALP} obtained a converse with a weaker constant and asked
whether the loss could be avoided. More precisely, for
$0<\theta\le1/2$, they asked whether the existence in \emph{every}
digraph on $n$ vertices of a quasi-kernel $Q$ with
$|N_D^+[Q]|\ge\theta n$ implies that \emph{every} source-free digraph
on $n$ vertices has a quasi-kernel of size at most $(1-\theta)n$.
The distinction between a universal assertion and a property of an
individual digraph is important here.

We obtain a positive answer by relating the weighted neighbourhood of
a quasi-kernel to its cardinality. For a nonnegative integer weighting
$f:V(D)\to\mathbb Z_{\ge0}$, write
$f(X)=\sum_{x\in X}f(x)$. Given $0<\theta\le1/2$, we say that $D$
satisfies \textup{($W_\theta$)} if every such weighting admits a
quasi-kernel $K$ with
\begin{equation}\label{eq:weighted-property}
 f\bigl(N_D^+[K]\bigr)\ge\theta f(V(D)).
\end{equation}
Let $\alpha(D)$ be the independence number of $D$, and let
$\gq(D)$ denote the minimum size of a quasi-kernel of $D$.
Our main result is the following.

\begin{theorem}\label{thm:transfer}
Let $0<\theta\le1/2$, and let $D$ be a source-free digraph
satisfying \textup{($W_\theta$)}. Choose a maximum independent set
$I^*$ of $D$, and let $\mu$ be the maximum size of a matching between
$I^*$ and $V(D)\setminus I^*$ in the underlying graph. Then
\begin{equation}\label{eq:transfer-main}
 \gq(D)\le
 \big\lfloor(1-\theta)\alpha(D)+\theta\mu\big\rfloor
 \le\big\lfloor(1-\theta)|V(D)|\big\rfloor.
\end{equation}
\end{theorem}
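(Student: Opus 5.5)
The plan is to apply ($W_\theta$) to one carefully chosen weighting supported on $I^*$, and then convert the resulting quasi-kernel $K$ into a small quasi-kernel by a local exchange argument. The weighting comes from the matching structure. By the defect form of Hall's theorem (equivalently König's theorem) applied to the bipartite graph between $I^*$ and $J:=V(D)\setminus I^*$, there is a set $A\subseteq I^*$ with
\[
|A|-|N(A)\cap J| = |I^*|-\mu=\alpha(D)-\mu,
\]
where $N(A)$ is taken in the underlying graph. Moreover, there is a maximum matching $M$ that saturates $I^*\setminus A$ and matches $N(A)\cap J$ into $A$. I would take $f$ to be the indicator of $A$, or a multiple of it if integrality or the floor needs slack. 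This gives $f(V(D))=|A|$.

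By ($W_\theta$) there is a quasi-kernel $K$ with $|N_D^+[K]\cap A|\ge\theta|A|$. The exchange step should turn $K$ into a quasi-kernel $Q$ satisfying
\[
|Q|\le \alpha(D)-|N_D^+[K]\cap A|+(\text{correction})\le \alpha(D)-\theta(\alpha(D)-\mu).
\]
I would look for $Q$ inside $I^*\cup K$, built as follows:
\begin{itemize}
\item keep the vertices of $I^*$ not dominated by $K$;
\item for the vertices of $A$ dominated by $K$, let a single in-neighbour in $K$ serve all of them at once;
\item use the matching $M$, together with the fact that every vertex of $I^*$ has an in-neighbour in $J$ (source-freeness plus independence of $I^*$), to charge each remaining vertex of $K$ to a distinct vertex of $I^*$ that it replaces.
\end{itemize}
Independence of $Q$ then has to be restored. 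I would use the standard Chv\'atal--Lov\'asz style argument: delete from the candidate set the vertices that have an in-neighbour (or out-neighbour) in it, processing in a suitable order. Reachability within two steps is preserved because the deleted vertices are reached in one step. Source-freeness is exactly what guarantees that every vertex of $I^*$ not chosen still has an in-neighbour, so it is reached.

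This yields $|Q|\le (1-\theta)|A|+|I^*\setminus A|-(\ldots)$. To land on $(1-\theta)\alpha(D)+\theta\mu$ I would also spend a $\theta$-fraction of the matched part. The cleanest way is probably to put weight $1$ on all of $I^*$ and weight $1$ on the matched partners $M(x)\in J$ for $x\in I^*\setminus A$. For each matched pair, gaining either endpoint in $N^+[K]$ saves at most one vertex while costing weight $2$, which balances to the $\theta\mu$ term. I would compute both versions and keep the one giving exactly $(1-\theta)\alpha+\theta\mu$. Since $|Q|$ is an integer, the floor follows immediately.

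The second inequality in \eqref{eq:transfer-main} is routine. It amounts to $(1-\theta)\alpha+\theta\mu\le(1-\theta)n$, and this follows from $\mu\le n-\alpha$ together with $\theta\le 1/2$:
\[
(1-\theta)\alpha+\theta(n-\alpha)=(1-2\theta)\alpha+\theta n\le(1-2\theta)n+\theta n=(1-\theta)n .
\]

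I expect the main obstacle to be the exchange step: showing that the vertices of $K$ added to $Q$ are paid for injectively by vertices of $I^*$ removed from $Q$, while $Q$ stays independent and still reaches everything within two steps. The first thing I would try is to insist that $K$ be chosen minimal (or inclusion-minimal among quasi-kernels with the weight property). I would then map each $k\in K\setminus I^*$ to an out-neighbour in $I^*\cap N^+[K]$ that is private to it. Where no private out-neighbour exists, I would fall back on the matching partner, using the Hall defect to control collisions.
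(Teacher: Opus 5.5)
\paragraph{There is a genuine gap, and it is in your choice of weighting, not only in the exchange step you leave open.}

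Property ($W_\theta$) controls the \emph{closed} out-neighbourhood $N_D^+[K]$. Any weighting supported on $I^*$ (your $1_A$, or $1_{I^*}$ plus the partners $M(x)$) can therefore be met by a witness that simply contains the weighted vertices. Such vertices count fully toward $f(N_D^+[K])$ but save nothing, since each must still be paid for in $Q$. Your bullet ``let a single in-neighbour in $K$ serve them'' does not apply to vertices of $K\cap A$, which have no in-neighbour in $K$.

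Here is a concrete failure. Let $m\ge3$, and take the source-free oriented graph on $x_1,\dots,x_m,c,c'$ with arcs $x_1\to c'$, $c'\to c$, and $c\to x_i$ for all $i$.
\begin{itemize}
\item $I^*=\{x_1,\dots,x_m\}$ is a maximum independent set, and $\mu=2$.
\item $D$ satisfies ($W_{1/2}$), because $\{c\}$ and $I^*$ are quasi-kernels whose closed out-neighbourhoods together cover $V(D)$.
\item For either maximum-deficiency set ($A=I^*$ or $A=\{x_2,\dots,x_m\}$) and for both of your weightings, $K=I^*$ is an inclusion-minimal witness with $f(N_D^+[K])=f(V(D))$.
\item $x_2,\dots,x_m$ are sinks whose only in-neighbour $c$ is at distance $2$ from $x_1$. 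Hence every quasi-kernel contained in $I^*\cup K=I^*$ must contain $x_2,\dots,x_m$, then also $x_1$, and so equals $I^*$.
\end{itemize}
That quasi-kernel has size $m>(1-\theta)m+2\theta$, although $\gamma_2(D)=1$. So no exchange inside $I^*\cup K$ can reach the bound, and inclusion-minimality does not exclude this witness.

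\paragraph{The missing idea is to put the weight one level up, on the in-neighbours of the unmatched vertices.}

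For each unmatched vertex $x$ of the deficient side (your $A$, the paper's $I$), choose an in-neighbour $p(x)$. It necessarily lies in $N(A)\cap J$ (the paper's $C$). Set $f(c)=|\{x:p(x)=c\}|$. Now $c\in N_D^+[K]$ certifies that all children of $c$ lie within two steps of $K$ at no extra cost. In the example this puts weight $m-2$ on $c$, which rules out the witness $I^*$.

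You then need the paper's neighbourhood-preserving selection lemma, which works as follows.
\begin{itemize}
\item It prunes $K$ to $R$ with $N_D^+(R)=N_D^+(K)$, keeping $K\setminus I$ and the $I$-endpoints of matching edges directed from $I$ to $C$.
\item Every pruned vertex stays covered.
\item Each retained vertex of $K\cap(C\cup I)$ is charged to a distinct vertex of $C\cap N_D^+[K]$: itself, its mate, or a private out-neighbour.
\item A charged $c$ with $c\to m(c)$ covers its mate. Mates with $m(c)\to c$ are covered automatically (retained if in $K$, still covered otherwise). These charges are therefore absorbed.
\item The weighted gain $t\ge\theta(\alpha(D)-\mu)$ is then a pure saving.
\end{itemize}
Finally, $|K\cap H|\le\alpha(D[H])=|I^*\setminus A|$ by maximality of $I^*$, since $A$ is anticomplete to $H$.

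Your Hall-defect set $A$ is a valid substitute for the paper's alternating-path decomposition. Your proof of the second inequality and the floor argument are also correct.
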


For $\theta=1/2$ it gives
\begin{equation}\label{eq:half-matching}
 \gq(D)\le\left\lfloor\frac{\alpha(D)+\mu}{2}\right\rfloor
 =\left\lfloor\frac{|V(D)|-d}{2}\right\rfloor, \text{  where  }
 d=|V(D)\setminus I^*|-\mu.
\end{equation}

Theorem~\ref{thm:transfer}, together with a weighted blow-up
observation of Spiro~\cite[Appendix~A.1 (arXiv version)]{Spiro},
yields the following equivalence. The reverse implication in the
proof retains the parameter $\theta$ exactly.
\begin{corollary}\label{cor:equivalence}
For every fixed $0<\theta\le1/2$, the following statements are
equivalent:
\begin{enumerate}
\item Every source-free digraph $D$ has a quasi-kernel of size at most
$(1-\theta)|V(D)|$.
\item Every digraph $D$ has a quasi-kernel $Q$ such that
$|N_D^+[Q]|\ge\theta|V(D)|$.
\end{enumerate}
In particular, the Small and Large Quasi-kernel Conjectures are
equivalent.
\end{corollary}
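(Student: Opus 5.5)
The plan is to prove the two implications separately: (1)$\Rightarrow$(2) is Spiro's argument with the constant $1/2$ replaced by $\theta$, and (2)$\Rightarrow$(1) uses Theorem~\ref{thm:transfer} together with the weighted blow-up.

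\emph{(1)$\Rightarrow$(2).} Let $D$ be any digraph on $n$ vertices and let $S$ be its set of sources. Every quasi-kernel contains $S$, and each source lies in $N^+_D[Q]$. I would induct on $n$.
\begin{itemize}
\item If $S=\emptyset$, apply (1) to get a quasi-kernel $Q$ with $|Q|\le(1-\theta)n$.
\item The natural claim would be that $N^+_D[Q]$ is large, but a small $Q$ does not force a large closed out-neighbourhood. So I would use Spiro's actual device from \cite[Proposition~4.1]{Spiro}. Attach to each vertex $v$ a new out-neighbour leaf $v'$, or construct a digraph $D'$ in which a small quasi-kernel translates into a large closed out-neighbourhood in $D$.
\end{itemize}
Concretely, I would reproduce Spiro's reduction verbatim with $1/2$ replaced by $\theta$, checking that only linearity in the constant is used. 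I expect this direction to be routine since the paper says it follows as in \cite{Spiro}.

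\emph{(2)$\Rightarrow$(1).} This is the main direction. Let $D$ be source-free. I first show that (2), assumed universally, implies that $D$ satisfies ($W_\theta$).

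Given $f:V(D)\to\Znonneg$, form the independent blow-up $D_f$: replace each $v$ by an independent set $B_v$ of $f(v)$ copies, and join $x\in B_u$ to $y\in B_v$ whenever $u\to v$. Vertices with $f(v)=0$ cause trouble, since they vanish. To handle them, give each vertex a large base weight $M$, i.e. blow up with $Mf(v)+1$ copies, or treat zero weights by a limiting argument. Apply (2) to $D_f$ to obtain a quasi-kernel $Q'$ with $|N^+_{D_f}[Q']|\ge\theta|V(D_f)|$.

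Next I project $Q'$ to $D$. Let $K=\{v: Q'\cap B_v\neq\emptyset\}$.
\begin{itemize}
\item $K$ is independent, because arcs between blobs correspond to arcs of $D$.
\item $K$ reaches everything within two steps.
\item Since blobs are twins, we may assume $Q'$ is a union of whole blobs. Replacing $Q'\cap B_v$ by all of $B_v$ keeps independence and only enlarges the closed out-neighbourhood.
\item Then $N^+_{D_f}[Q']=\bigcup_{v\in N^+_D[K]}B_v$, whose size is $f(N^+_D[K])$.
\end{itemize}
This gives \eqref{eq:weighted-property}. The $+1$ perturbation contributes an error of at most $n/(Mf(V)+n)$ relative to the total. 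Since the set of candidate $K$ is finite, letting $M\to\infty$ yields a single $K$ that works exactly.

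Now Theorem~\ref{thm:transfer} gives $\gq(D)\le\lfloor(1-\theta)|V(D)|\rfloor$, which is (1). Taking $\theta=1/2$ gives the equivalence of Conjectures~\ref{conj:small} and~\ref{conj:large}.

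The main obstacle I anticipate is the zero-weight and source issue. The blow-up must be a digraph to which (2) applies, which it is since (2) is universal and needs no source-freeness. But the projection must produce a quasi-kernel of $D$ itself, not merely a set covering positive-weight vertices. Using all-positive blow-ups plus the finite-pigeonhole limit argument settles this.
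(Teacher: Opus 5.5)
Your proposal is correct and follows the paper's proof. For (2)$\Rightarrow$(1) you use the same blow-up with $Mf(v)+1$ copies, project to $K_M$, fix a single $K$ along a subsequence by finiteness, and then invoke Theorem~\ref{thm:transfer}; for (1)$\Rightarrow$(2) you defer, as the paper does, to Spiro's Proposition~4.1 (see also Ai--Liu--Peng).

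Two minor remarks. First, the whole-blob replacement is unnecessary, since the inclusion $N^+_{D_M}[Q_M]\subseteq\bigcup_{v\in N^+_D[K_M]}B_v$ already suffices. Second, if you write out (1)$\Rightarrow$(2) yourself, a single pendant out-leaf per vertex only gives $|N^+_D[Q]|\ge|Q|+(2\theta-1)|V(D)|$, which is useless for $\theta\le 1/2$. You would therefore need many leaves per vertex, with a limit argument, or Spiro's actual device.
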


A \emph{break digraph}~\cite{Break} is an oriented graph $D$ admitting a
partition $V(D)=S\mathbin{\dot\cup}T$ such that $D[S]$ is acyclic
and $D[T]$ is a tournament, with no restriction on the arcs between
$S$ and $T$. In particular, $S$ need not be independent.
Ai, Huang, Liu, Tang and Xu~\cite{Break} proved a $2n/3$ bound for
source-free break digraphs. Nguyen, Scott and Seymour~\cite{NSS} established a weighted neighbourhood theorem for this
class. Combining it with Theorem~\ref{thm:transfer}, we obtain the following corollary.
\begin{corollary}\label{cor:break}
Every source-free break digraph $D$ has a quasi-kernel of size at
most $\lfloor |V(D)|/2\rfloor$.
\end{corollary}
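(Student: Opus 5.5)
The plan is to apply Theorem~\ref{thm:transfer} with $\theta=1/2$. Its second inequality then gives $\gq(D)\le\lfloor |V(D)|/2\rfloor$ at once. So the whole task reduces to checking that every source-free break digraph $D$ satisfies \textup{($W_{1/2}$)}: for every weighting $f:V(D)\to\Znonneg$ there is a quasi-kernel $K$ of $D$ with $f(N_D^+[K])\ge f(V(D))/2$. The source-free hypothesis is needed only for Theorem~\ref{thm:transfer} itself. Property \textup{($W_{1/2}$)} should hold for all break digraphs, since the Large Quasi-kernel Conjecture carries no source-free assumption.

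To obtain \textup{($W_{1/2}$)} I will invoke the weighted neighbourhood theorem of Nguyen, Scott and Seymour~\cite{NSS} for break digraphs. I expect it to say that every break digraph with nonnegative weights has a quasi-kernel whose closed neighbourhood carries at least half the total weight. Fix a break digraph $D$ with partition $V(D)=S\mathbin{\dot\cup}T$, where $D[S]$ is acyclic and $D[T]$ is a tournament, and fix a weighting $f$. Applying their theorem to $(D,f)$ should give exactly \eqref{eq:weighted-property} with $\theta=1/2$.

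The main obstacle is aligning conventions, and I will check two points.
\begin{enumerate}
\item If~\cite{NSS} uses the reverse convention (paths ending in the quasi-kernel and in-neighbourhoods), I will apply their result to the converse digraph $\overleftarrow{D}$ obtained by reversing every arc. This is legitimate because the class of break digraphs is closed under reversal: the converse of an acyclic digraph is acyclic, the converse of a tournament is a tournament, and reversal preserves being an oriented graph. A quasi-kernel of $\overleftarrow{D}$ in their sense is then a quasi-kernel of $D$ in ours, with matching neighbourhoods.
\item If their theorem is stated for real or unit weights, or only for vertex counts under blow-ups, I will pass to our integer weighting. One route is to view $f$ as a special case of a real weighting. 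The other is to replace each vertex $v$ by $f(v)$ independent copies, using Spiro's independent blow-up. In that case I must verify that the blow-up of a break digraph is again a break digraph (or lies in the class covered by~\cite{NSS}). I must also verify that quasi-kernels of the blow-up project to quasi-kernels of $D$ without decreasing the weighted neighbourhood.
\end{enumerate}

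Granting these checks, $D$ satisfies \textup{($W_{1/2}$)}. Theorem~\ref{thm:transfer} then yields $\gq(D)\le\lfloor(\alpha(D)+\mu)/2\rfloor\le\lfloor|V(D)|/2\rfloor$, which is the statement.
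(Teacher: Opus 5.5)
Your proposal is correct and matches the paper's proof: Theorem~\ref{thm:nss}, as quoted in the paper, is already stated with nonnegative integer weights and in the present out-neighbourhood convention, so it gives \textup{($W_{1/2}$)} for every break digraph directly, and Theorem~\ref{thm:transfer} with $\theta=1/2$ then gives the bound. Your fallback checks are therefore unnecessary (and the blow-up fallback would not have worked as stated, since blowing up a vertex of $T$ into an independent set destroys the tournament on $T$).
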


The proof of Theorem~\ref{thm:transfer} proceeds in two steps. We first refine the neighbourhood-preserving selection argument of~\cite{KLS,ALP} to retain prescribed endpoints of matching arcs. We then apply an alternating-path argument, based on Berge's augmenting-path criterion~\cite{Berge}, to obtain a matching configuration suitable for the refined selection lemma. The details are given in Sections~\ref{sec:selection} and~\ref{sec:mixed}, and the two corollaries are derived in Section~\ref{sec:applications}.

\section{Neighbourhood-preserving selection}\label{sec:selection}

Two vertex sets are \emph{anticomplete} if no arc joins a vertex of
one set to a vertex of the other. We first show that a quasi-kernel can be reduced while preserving
its open out-neighbourhood and certain prescribed vertices.

\begin{lemma}\label{lem:selection}
Suppose that a loopless digraph $D$ has a partition
$V(D)=H\mathbin{\dot\cup}C\mathbin{\dot\cup}I$, where $I$ is independent
and anticomplete to $H$. Let $K$ be a quasi-kernel of $D$. Suppose that
$X\subseteq K\cap I$ admits an injection $\phi:X\to C$ such that
$x\to\phi(x)$ is an arc for every $x\in X$. There is a set $R\subseteq K$
containing $(K\setminus I)\cup X$ such that
\begin{equation}\label{eq:preservation}
 N_D^+(R)=N_D^+(K),\qquad
 C\cap N_D^+[R]=C\cap N_D^+[K],
\end{equation}
and
\begin{equation}\label{eq:representative-cost}
 |R|\le |K\cap H|+|C\cap N_D^+[K]|.
\end{equation}
If $Z=V(D)\setminus N_D^{\le2}[R]$, then $Z\subseteq K\cap I$ and
$P=R\cup Z\subseteq K$ is a quasi-kernel of $D$.
\end{lemma}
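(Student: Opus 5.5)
The plan is to build $R$ greedily from the forced part $R_0=(K\setminus I)\cup X$ and then add one representative from $K\cap I$ for each vertex of $C$ that $K$ dominates but $R_0$ does not. Everything rests on the structural assumption: $I$ is independent and anticomplete to $H$, so every out-neighbour of a vertex of $I$ lies in $C$. Hence
\[
N_D^+(K)=N_D^+(K\setminus I)\cup\bigl(N_D^+(K\cap I)\cap C\bigr).
\]
We also use that $K$ is independent. For any $R\subseteq K$, an out-neighbour of a vertex of $R$ therefore cannot lie in $K$, so $N_D^+(R)$ is simply the set of all out-neighbours of $R$.

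\textbf{Construction.} Let $U=(C\cap N_D^+(K))\setminus N_D^+(R_0)$. For each $c\in U$, the decomposition above forces some $x_c\in K\cap I$ with $x_c\to c$; fix one such choice. Put $R=R_0\cup\{x_c:c\in U\}$. Then $R\subseteq K$ and $R\supseteq (K\setminus I)\cup X$.

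By the decomposition, $N_D^+(R)\supseteq N_D^+(K\setminus I)\cup (C\cap N_D^+(K))=N_D^+(K)$. The reverse inclusion holds by independence of $K$ and $R\subseteq K$, giving the first part of \eqref{eq:preservation}. For the second part, note that $C\cap K\subseteq K\setminus I\subseteq R$. Combined with $N_D^+(R)=N_D^+(K)$, this gives $C\cap N_D^+[R]=C\cap N_D^+[K]$.

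\textbf{Counting, the only delicate step.} I expect the bound \eqref{eq:representative-cost} to need the most care, since it requires an injective charging of $R\setminus H$ into $C\cap N_D^+[K]$. Write
\[
R=(K\cap H)\;\dot\cup\;(K\cap C)\;\dot\cup\;X\;\dot\cup\;\{x_c:c\in U\}.
\]
The last union is disjoint because each $x_c\notin R_0$: if $x_c$ were in $R_0$, then $c$ would already lie in $N_D^+(R_0)$. We charge as follows.
\begin{itemize}
\item Each vertex of $K\cap C$ is charged to itself.
\item Each $x\in X$ is charged to $\phi(x)$, which lies in $C\cap N_D^+(K)$ because $\phi(x)\in C$, $x\to\phi(x)$, and $\phi(x)\notin K$ by independence.
\item Each $x_c$ is charged to $c\in U$.
\end{itemize}
These targets are pairwise distinct. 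First, $K\cap C$ is disjoint from $N_D^+(K)$. Second, $\phi$ is injective, and the maps $c\mapsto x_c$ and $x_c\mapsto c$ are mutually inverse, so each of the last two charges is injective. Third, $\phi(X)\subseteq N_D^+(R_0)$ is disjoint from $U$. Hence $|R|-|K\cap H|\le|C\cap N_D^+[K]|$.

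\textbf{The set $Z$ and the quasi-kernel $P$.} Let $v\in Z$. Since $K$ is a quasi-kernel, either $v\in K$, or $v\in N_D^+(K)$, or $v$ is an out-neighbour of some $w\in N_D^+(K)$.
\begin{itemize}
\item If $v\in N_D^+(K)=N_D^+(R)$, then $v\in N_D^{\le2}[R]$.
\item If $v$ is an out-neighbour of $w\in N_D^+(K)=N_D^+(R)$, then again $v\in N_D^{\le2}[R]$.
\end{itemize}
Both cases contradict $v\in Z$, so $v\in K$. Since $v\notin R\supseteq K\setminus I$, we get $v\in K\cap I$. Thus $Z\subseteq K\cap I$ and $P=R\cup Z\subseteq K$. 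As a subset of $K$, $P$ is independent. Finally, $N_D^{\le2}[P]\supseteq N_D^{\le2}[R]\cup Z=V(D)$, so $P$ is a quasi-kernel of $D$.
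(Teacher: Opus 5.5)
Your proof is correct and is essentially the paper's argument. The paper takes an inclusion-minimal $J\subseteq (K\cap I)\setminus X$ with $N_D^+(F\cup J)=N_D^+(K)$ and charges each $j\in J$ injectively to a private out-neighbour $c_j$, which lies in your set $U$; you instead choose one representative $x_c$ for each $c\in U$, and in both arguments the disjointness of $K\cap C$, $\phi(X)$ and $U$ inside $C\cap N_D^+[K]$ gives the bound.

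One small slip: distinct $c,c'\in U$ may have $x_c=x_{c'}$, so the two maps are not ``mutually inverse''. You only need $|\{x_c:c\in U\}|\le|U|$, which holds because this set is the image of $U$ under $c\mapsto x_c$.
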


\begin{proof}
Let $F=(K\setminus I)\cup X$, and choose an inclusion-minimal set
$J\subseteq(K\cap I)\setminus X$ such that, with $R=F\cup J$,
$N_D^+(R)=N_D^+(K)$. Such a set exists since $F\cup((K\cap I)\setminus
X)=K$.

For each $j\in J$, the minimality of $J$ gives a vertex
$c_j\in N_D^+(j)\setminus N_D^+(R\setminus\{j\})$. Since $I$ is
independent and anticomplete to $H$, we have $c_j\in C$. Moreover,
these vertices are pairwise distinct and lie outside $N_D^+(F)$.
The sets $K\cap C$, $\phi(X)$ and $\{c_j:j\in J\}$ are pairwise
disjoint: this follows from the independence of $K$, the injectivity
of $\phi$, and the choice of the private out-neighbours $c_j$.
They are all contained in $C\cap N_D^+[K]$. Hence
\[
 |K\cap C|+|X|+|J|\le |C\cap N_D^+[K]|,
\]
which, upon adding $|K\cap H|$, gives \eqref{eq:representative-cost}.
As $R$ retains $K\cap C$ and $N_D^+(R)=N_D^+(K)$, both equalities in
\eqref{eq:preservation} follow.

We claim that every vertex outside $K$ is still reachable from $R$
in at most two steps. Indeed, a vertex in $N_D^+(K)$ is also in
$N_D^+(R)$. If $k\to u\to y$ is a path with $k\in K$, then
$u\in N_D^+(K)=N_D^+(R)$, and there exists $r\in R$ with
$r\to u\to y$. Thus $Z\subseteq K$. Since $K\setminus I\subseteq R$,
we have $Z\subseteq K\cap I$. Consequently $P=R\cup Z$ is an
independent set, as $P\subseteq K$, and is a quasi-kernel of $D$.
\end{proof}

\section{Matching transfer and alternating paths}\label{sec:mixed}
\subsection{A mixed-direction matching lemma}

In the next lemma the matching is taken in the underlying graph;
its arcs need not have the same direction in $D$. If a matched pair
has two opposite arcs, we assign it to the first of the two classes
below.


\begin{lemma}\label{lem:mixed}
Fix $0<\theta\le 1/2$. Let $D$ be a loopless digraph with property
~\textup{($W_\theta$)} and a partition
$V(D)=H\mathbin{\dot\cup}C\mathbin{\dot\cup}I$, where $I$ is independent
and anticomplete to $H$. Suppose that a matching between $C$ and $I$ in
the underlying graph saturates $C$, and that each unmatched vertex of
$I$ has an in-neighbour in $D$. Let $k=|C|$ and $i=|I|$. Then there is a
quasi-kernel $P$ of $D$ such that
\begin{equation}\label{eq:matched-bound}
 |P|\le (1-\theta)i+\theta k+\alpha(D[H]).
\end{equation}
\end{lemma}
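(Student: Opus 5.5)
The plan is to combine property ($W_\theta$), applied to a weighting concentrated on $C\cup I$, with Lemma~\ref{lem:selection}. The weighting gives a quasi-kernel $K$ that covers a large share of the matched structure. Lemma~\ref{lem:selection} then prunes $K$ to $P=R\cup Z$ and bounds its cost.

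Fix a matching $M$ between $C$ and $I$ that saturates $C$, and write $x_c\in I$ for the partner of $c\in C$. Split the matched pairs into \emph{forward} pairs, those with an arc $x_c\to c$ (this class includes opposite arcs), and \emph{backward} pairs, those with only $c\to x_c$. Let $U\subseteq I$ be the set of $i-k$ unmatched vertices of $I$; each has an in-neighbour by hypothesis.

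For the quasi-kernel $K$ supplied by ($W_\theta$), take $X$ to be the set of $x\in K\cap I$ that lie in forward pairs, with $\phi(x)$ the partner of $x$. Lemma~\ref{lem:selection} then gives
\[
|P|\le |K\cap H|+|C\cap N_D^+[K]|+|Z|\le \alpha(D[H])+|C\cap N_D^+[K]|+|Z|,
\]
since $K\cap H$ is independent. It therefore suffices to prove $|C\cap N_D^+[K]|+|Z|\le \theta k+(1-\theta)i$.

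To bound this, I would charge the cost to the matched pairs and to $U$. A vertex $z\in Z$ lies in $K\cap I$ but is not reachable from $R$ within two steps. Since $N_D^+(R)=N_D^+(K)$, every in-neighbour of $z$ lies outside $N_D^+[K]$. This constrains the possibilities as follows.
\begin{itemize}
\item A backward pair whose $c$ lies in $N_D^+[K]$ cannot also contribute its $x_c$ to $Z$: otherwise $c\in K$, which contradicts independence because $c\to x_c$, or $c\in N_D^+(K)$, which makes $x_c$ reachable.
\item A forward pair contributes its $x_c$ only through $X\subseteq R$, never through $Z$.
\end{itemize}
Consequently each matched pair costs at most $1$, each vertex of $U$ costs at most $1$, and the naive total is at most $i$. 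The required saving is $\theta(i-k)$. It must come from two kinds of vertices: pairs whose $C$-vertex lies outside $N_D^+[K]$ and whose $I$-vertex is not in $Z$, and vertices of $U\setminus Z$.

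Here the weighting enters. I would put weight on $C\cup I$, on the vertices whose coverage by $N_D^+[K]$ certifies a saving, with weights scaled so that $f(V(D))$ corresponds to the full budget. Inequality~\eqref{eq:weighted-property} then yields a saving of at least a $\theta$-fraction. My first attempt is $f\equiv1$ on $U$ and $f\equiv1$ on the $I$-endpoints of matched pairs, with $f\equiv0$ elsewhere. In this attempt, a vertex of $I$ covered by $N_D^+(K)\setminus K$ is not in $Z$ and so costs nothing. Integer weights allow multiplicities, so in the adjustment described below some vertices may receive weight $2$ rather than $1$.

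The main obstacle is the bookkeeping in this last step: showing that covered weight translates exactly into saved cost. The difficulty lies in the pairs where both endpoints matter. The case analysis splits according to whether $c\in K$, $c\in N_D^+(K)$, $x_c\in K$, or $x_c\in N_D^+(K)$, and I expect that only the two cases where $c$ and $x_c$ are both in $N_D^+[K]$ can cost $1$. If the simple $0/1$ weighting leaves a gap there, I would raise to $2$ the weight of the $I$-endpoint of each backward pair, so that covering it counts twice, and recheck that the inequality $|C\cap N_D^+[K]|+|Z|\le\theta k+(1-\theta)i$ still follows after dividing by the total weight.
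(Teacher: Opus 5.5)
Your argument breaks at the step you defer as bookkeeping. No weighting supported on $I$ can work, so neither of your two candidate weightings closes the proof. An $I$-vertex counts as covered in \eqref{eq:weighted-property} when it lies in $K$ itself, but that does not keep it out of $Z$.

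Here is a concrete failure. Let $H=\varnothing$, $C=\{c_1,c_2\}$, $I=\{x_1,x_2,u\}$, with arcs $x_2\to c_2$, $c_2\to c_1$, $c_1\to x_1$, $c_1\to u$, and matching $\{c_1x_1,c_2x_2\}$. All hypotheses hold. The digraph has \textup{($W_\theta$)}, because $\{x_2,c_1\}$ is a quasi-kernel whose closed out-neighbourhood is $V(D)$. But $K=I$ is also a quasi-kernel. Since $I=K\subseteq N_D^+[K]$, it satisfies \eqref{eq:weighted-property} for every weighting supported on $I$, including both of yours, so \textup{($W_\theta$)} may hand you this $K$. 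Your choices then give $X=\{x_2\}$. Since $N_D^+(\{x_2\})=\{c_2\}=N_D^+(K)$, the selection yields $R=\{x_2\}$ and $Z=\{x_1,u\}$. Hence $|C\cap N_D^+[K]|+|Z|=3>3-\theta=\theta k+(1-\theta)i$, and $P=I$ genuinely violates \eqref{eq:matched-bound}. The pair $c_1x_1$ also contradicts your expectation that only pairs with both ends in $N_D^+[K]$ can cost $1$: here $c_1\notin N_D^+[K]$, yet $x_1\in Z$.

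A symptom of the problem is that you never use the hypothesis that unmatched vertices of $I$ have in-neighbours. The lemma is false without it: take $C=H=\varnothing$.

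The missing idea is to place the weight on $C$, at designated in-neighbours of the unmatched vertices. For each $x\in U$ fix an in-neighbour $p(x)$; it lies in $C$ because $I$ is independent and anticomplete to $H$. Set $f(c)=|\{x\in U:p(x)=c\}|$ for $c\in C$ and $f=0$ elsewhere. Then $f(V(D))=i-k$, and the quasi-kernel $K$ supplied by \textup{($W_\theta$)} satisfies $t:=f(N_D^+[K])\ge\theta(i-k)$. By \eqref{eq:preservation}, $C\cap N_D^+[R]=C\cap N_D^+[K]$. So each of the $t$ distinct vertices $x\in U$ with $p(x)\in N_D^+[K]$ is reached from $R$ within two steps through $p(x)\to x$, and is not in $Z$.

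Combine this with your correct observation that every matched pair contributes at most $1$ to $|C\cap N_D^+[K]|+|Z|$. You get $|C\cap N_D^+[K]|+|Z|\le k+(i-k-t)\le(1-\theta)i+\theta k$, which is exactly the paper's argument. No savings from the matched pairs are needed; the whole saving comes from the $t$ unmatched vertices whose designated parent is covered.
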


\begin{proof}
For $c\in C$, denote its matching mate by $m(c)\in I$, and let
\[
 C_+=\{c\in C:c\to m(c)\},\qquad C_-=C\setminus C_+,
 \qquad I_+=m(C_+),\quad I_-=m(C_-).
\]
Every $c\in C_-$ satisfies $m(c)\to c$. Put $b=|C_-|=|I_-|$, and
let $J=I\setminus(I_+\cup I_-)$ be the set of unmatched vertices, so
$u:=|J|=i-k$.

For each $x\in J$, choose an in-neighbour $p(x)\in C$; such a vertex
exists by the hypotheses. Define a nonnegative integer weighting $f$ by
\[
 f(c)=|\{x\in J:p(x)=c\}|\quad(c\in C),
 \qquad f(y)=0\quad(y\notin C).
\]
Note that $f(V(D))=u$. By \textup{($W_\theta$)}, there is a
quasi-kernel $K$ of $D$ such that
\begin{equation}\label{eq:weighted-witness}
 t:=f\bigl(N_D^+[K]\bigr)\ge \theta u.
\end{equation}
For $u=0$ we use the zero weighting.

Apply Lemma~\ref{lem:selection} with $X=K\cap I_-$, mapping each
vertex of $X$ to its matching mate in $C_-$. We obtain sets $R$ and
$Z$ such that $P=R\cup Z$ is a quasi-kernel and
\eqref{eq:preservation} holds. Write
\[
 c_+=|C_+\cap N_D^+[K]|,\qquad
 c_-=|C_-\cap N_D^+[K]|.
\]
We now count the vertices of $I$ reached from $R$ in at most two
steps. Every vertex of $I_-$ is reached: the vertices in $K\cap I_-$
are retained, and those outside $K$ remain covered by
Lemma~\ref{lem:selection}. Also, for every
$c\in C_+\cap N_D^+[K]$, \eqref{eq:preservation} gives
$c\in N_D^+[R]$; the arc $c\to m(c)$ then shows that $m(c)$ is
reached within two steps. Finally, the weight $t$ counts exactly the
vertices $x\in J$ for which $p(x)\in N_D^+[K]$. Each is reached
through its assigned arc $p(x)\to x$. Although several vertices of
$J$ may have the same parent, they contribute separately to $t$.
These three groups are disjoint and have sizes $b$, $c_+$ and $t$,
respectively. Since $Z\subseteq K\cap I$, it follows that
\begin{equation}\label{eq:uncovered}
 |Z|\le i-b-c_+-t.
\end{equation}
On the other hand, \eqref{eq:representative-cost} gives
$|R|\le |K\cap H|+c_++c_-$. As $R$ and $Z$ are disjoint, this and
\eqref{eq:uncovered} yield
\begin{align*}
 |P|&\le |K\cap H|+c_++c_-+i-b-c_+-t\\
     &=|K\cap H|+i-(b-c_-)-t\\
     &\le |K\cap H|+i-\theta u\\
     &=|K\cap H|+(1-\theta)i+\theta k.
\end{align*}
Here we used $c_-\le b$ and $t\ge\theta u$.
Finally, $|K\cap H|\le\alpha(D[H])$, proving
\eqref{eq:matched-bound}.
\end{proof}

\subsection{The alternating-path decomposition}

We next obtain a partition to which Lemma~\ref{lem:mixed} can be
applied. The construction uses alternating paths of a maximum
matching in the underlying graph.

\begin{lemma}\label{lem:decomposition}
Let $G$ be a simple undirected graph, let $I^*$ be a maximum
independent set, and put $C^*=V(G)\setminus I^*$. Let $M$ be a maximum
matching in the bipartite graph of all edges between $C^*$ and $I^*$,
with $\mu=|M|$ and $d=|C^*|-\mu$. There is a partition
$V(G)=H\mathbin{\dot\cup}C\mathbin{\dot\cup}I$ and a nonnegative integer
$v$ such that $I\subseteq I^*$ has no neighbours in $H$, the restriction of
$M$ to $C,I$ saturates $C$, and
\begin{equation}\label{eq:decomposition}
 \begin{gathered}
 |H|=2v+d,\qquad \alpha(G[H])=v,\\
 |C|+v=\mu,\qquad |I|+v=\alpha(G).
 \end{gathered}
\end{equation}
\end{lemma}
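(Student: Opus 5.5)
The plan is a König-type alternating-path construction in the bipartite graph $B$ of edges between $C^*$ and $I^*$. Let $U\subseteq C^*$ be the set of $M$-unmatched vertices of $C^*$, so $|U|=d$. Call a path in $B$ \emph{$M$-alternating from $U$} if it starts in $U$ and alternates between a non-matching edge (from $C^*$ to $I^*$) and a matching edge (from $I^*$ back to $C^*$). Let $H$ be the set of all vertices reachable from $U$ by such paths, including $U$ itself. Then put $C=C^*\setminus H$ and $I=I^*\setminus H$.

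\textbf{Structure of $H$.} Every vertex $y\in H\cap I^*$ is $M$-matched. Otherwise the alternating path reaching $y$ would be $M$-augmenting, contradicting the maximality of $M$ by Berge's criterion~\cite{Berge}. The mate of such a $y$ is reached by one more matching edge, so it lies in $H\cap C^*$. Conversely, every vertex of $(H\cap C^*)\setminus U$ is reached through a matching edge from a vertex of $H\cap I^*$. So matching gives a bijection between $H\cap I^*$ and $(H\cap C^*)\setminus U$. Writing $v=|H\cap I^*|$, we get $|H|=2v+d$.

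\textbf{Properties of $C$ and $I$.} First, $I$ has no neighbours in $H$. It has none in $H\cap I^*$ because $I^*$ is independent. If $c\in H\cap C^*$ were adjacent to $y\in I$, then the edge $cy$ is not in $M$, since $c$'s mate, if any, lies in $H$. Hence $y$ would be reachable from $U$, a contradiction. Second, each $c\in C$ is matched, because the unmatched vertices of $C^*$ lie in $U\subseteq H$. Its mate is not in $H\cap I^*$, since the mates of those vertices lie in $H$. Hence $M$ restricted to $C,I$ saturates $C$. The matched edges of $M$ thus split as $v$ inside $H$ and $|C|$ between $C$ and $I$, giving $|C|+v=\mu$. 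Also $|I|=|I^*|-v=\alpha(G)-v$.

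\textbf{Computing $\alpha(G[H])$.} The set $H\cap I^*$ is independent of size $v$, so $\alpha(G[H])\ge v$. For the reverse inequality, let $S\subseteq H$ be independent. Since $I$ is independent and has no neighbours in $H$, the set $S\cup I$ is independent. Therefore $|S|+\alpha(G)-v\le\alpha(G)$, which gives $|S|\le v$.

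The only delicate step is the closure argument: that reached $I^*$-vertices are matched (via maximality of $M$) and that $I$ sends no edges into $H\cap C^*$. Both follow from the definition of reachability together with Berge's criterion. The remaining statements are bookkeeping.
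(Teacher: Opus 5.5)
Your proposal is correct and follows essentially the same route as the paper. The paper likewise takes the set reachable from the unmatched vertices of $C^*$ by alternating paths (phrased via an auxiliary orientation), uses Berge's criterion to get the bijection with mates, and bounds $\alpha(G[H])$ by taking the union with the anticomplete set $I$.
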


\begin{proof}
Let $L\subseteq C^*$ be the vertices unmatched by $M$, so $|L|=d$.
Orient the edges of the bipartite graph temporarily: nonmatching edges
point from $C^*$ to $I^*$, and matching edges point from $I^*$ to $C^*$.
Let $A\subseteq C^*$ and $B\subseteq I^*$ be the vertices reachable from
$L$ in this auxiliary orientation, allowing paths of length zero.

No vertex of $B$ is unmatched. Otherwise, a shortest auxiliary path
from $L$ to an unmatched vertex of $I^*$ has distinct vertices and
alternates between nonmatching and matching edges, starting and ending
with a nonmatching edge. Replacing its matching edges by its
nonmatching edges gives a matching larger than $M$, a contradiction.
Every vertex of $B$ therefore has its matching mate in $A$. Conversely,
a vertex of $A\setminus L$ can only be reached through its matching
edge, so its mate is in $B$. Consequently $M$ gives a bijection between
$B$ and $A\setminus L$. Put $v=|B|$; then $|A|=v+d$.

Every neighbour in $I^*$ of a vertex of $A$ belongs to $B$. A
nonmatching edge extends an auxiliary path from $L$. If the edge is a
matching edge, its $I^*$ endpoint is already in $B$ by the preceding
bijection. Thus $A$ is anticomplete to $I^*\setminus B$.

Set
\[
 H=A\cup B,\qquad C=C^*\setminus A,\qquad I=I^*\setminus B.
\]
The set $I$ is anticomplete to $A$, and to $B$ by independence of $I^*$;
hence it is anticomplete to all of $H$. Every unmatched vertex of $C^*$
lies in $A$, and every mate of a vertex of $B$ lies in $A$ as well.
It follows that every vertex of $C$ is matched to a vertex of $I$.
We have $|H|=2v+d$, and exactly $v$ edges of $M$ lie between $A$ and
$B$. The remaining edges match $C$ into $I$, giving $|C|+v=\mu$.
Also, $|I|+v=|I^*|=\alpha(G)$.

Finally, $B$ is an independent set of $G[H]$, so
$\alpha(G[H])\ge v$. If $G[H]$ had an independent set larger than $v$,
its union with the anticomplete set $I$ would be an independent set
larger than $I^*=B\cup I$. This contradicts the choice of $I^*$ and
proves $\alpha(G[H])=v$, completing \eqref{eq:decomposition}.
\end{proof}

The orientation used here is auxiliary; it is independent of the
arc directions in any digraph whose underlying graph is $G$.

\subsection{Proof of the parameter transfer theorem}
\begin{proof}[Proof of Theorem~\ref{thm:transfer}]
Let $n=|V(D)|$ and let $G$ be the underlying graph of $D$. Choose a maximum independent
set $I^*$ and a maximum matching $M$ between $I^*$ and
$C^*=V(G)\setminus I^*$, so $\mu=|M|$. Apply
Lemma~\ref{lem:decomposition} to obtain $H,C,I$ and $v$.

The set $I$ is independent and anticomplete to $H$, while the remaining
underlying matching saturates $C$. Since $D$ is source-free, every
unmatched vertex of $I$ has an in-neighbour. Such an in-neighbour must
lie in $C$, so Lemma~\ref{lem:mixed} applies to $D$ itself. With the
identities of Lemma~\ref{lem:decomposition}, it gives a quasi-kernel
$P$ satisfying
\begin{align*}
 |P|&\le \alpha(D[H])+(1-\theta)|I|+\theta|C|\\
     &=v+(1-\theta)(\alpha(D)-v)+\theta(\mu-v)\\
     &=(1-\theta)\alpha(D)+\theta\mu.
\end{align*}
As $\mu\le |C^*|=n-\alpha(D)$ and $0<\theta\le 1/2$,
\begin{align}\label{eq:parameter-order}
 \gq(D)&\le(1-\theta)\alpha(D)+\theta\mu\notag\\
 &\le\theta n+(1-2\theta)\alpha(D)\le(1-\theta)n.
\end{align}
Taking integer parts establishes~\eqref{eq:transfer-main}.
\end{proof}

The argument includes the case $u=0$, when the weight function in
Lemma~\ref{lem:mixed} is identically zero. If $C=\varnothing$, the
source-free assumption forces $I=\varnothing$, and the same conclusion
holds. The empty digraph has the empty quasi-kernel.

\section{Universal equivalence and break digraphs}\label{sec:applications}
\subsection{From universal unweighted neighbourhoods to weights}

We use the independent blow-up construction recorded by
Spiro~\cite[Appendix~A.1 (arXiv version)]{Spiro} to pass from a
universal unweighted assertion to the weighted property. The extra
unit in each blow-up size allows zero weights.

\begin{lemma}\label{lem:blowup}
Fix $0<\theta\le1/2$. Suppose that every loopless digraph $F$
has a quasi-kernel $Q$ with
$|N_F^+[Q]|\ge\theta|V(F)|$. Then every loopless digraph $D$
has property~\textup{($W_\theta$)}.
\end{lemma}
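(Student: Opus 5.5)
Given a loopless digraph $D$ and a weighting $f:V(D)\to\Znonneg$, we build the independent blow-up $F$ in which each vertex $x$ is replaced by an independent set $B_x$ of size $f(x)+1$. We put an arc $x'\to y'$ for $x'\in B_x$, $y'\in B_y$ exactly when $x\to y$ in $D$. Since $D$ is loopless, each $B_x$ is independent, and $F$ is loopless. By hypothesis, $F$ has a quasi-kernel $Q$ with $|N_F^+[Q]|\ge\theta|V(F)|=\theta\bigl(f(V(D))+n\bigr)$, where $n=|V(D)|$. The plan is to project $Q$ to $K=\{x:B_x\cap Q\ne\varnothing\}$ and show that $K$ is a quasi-kernel of $D$ with $f(N_D^+[K])\ge\theta f(V(D))$.

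The first step is to check that $K$ is a quasi-kernel. It is independent, since an arc $x\to y$ with $x,y\in K$ would lift to an arc between two vertices of $Q$. For coverage, take $y\in V(D)$ and any $y'\in B_y$. Some $q\in Q\cap B_x$ reaches $y'$ within two steps, and projecting that path gives a path from $x\in K$ to $y$ of length at most two. A path of length two might project to a walk $x\to z\to x$, but then $y=x\in K$, which is harmless. So $N_D^{\le2}[K]=V(D)$.

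The second step, which is the main obstacle, is the weight comparison. The natural bound is $|N_F^+[Q]|\le\sum_{x\in N_D^+[K]}(f(x)+1)=f(N_D^+[K])+|N_D^+[K]|$, and the extra unit per vertex spoils the inequality. To remove it I would first normalise $Q$. If $Q$ meets $B_x$, replace $Q\cap B_x$ by all of $B_x$. This keeps $Q$ independent, since vertices of the same blob have identical neighbourhoods and there are no arcs inside a blob. It also keeps the quasi-kernel property, and it only enlarges $N_F^+[Q]$. After normalisation, $N_F^+[Q]$ is exactly the union of the blobs $B_x$ with $x\in N_D^+[K]$, so $|N_F^+[Q]|=f(N_D^+[K])+|N_D^+[K]|$.

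This still has the $+|N_D^+[K]|$ term, so the fix is to scale the weights. Use blobs of size $Nf(x)+1$ for a large integer $N$. Applying the hypothesis then gives $N f(N_D^+[K_N])+n\ge\theta\bigl(Nf(V(D))+n\bigr)$, which rearranges to $f(N_D^+[K_N])\ge\theta f(V(D))-(1-\theta)n/N$. There are only finitely many candidate sets $K\subseteq V(D)$ and $f(N_D^+[K])$ takes integer values. Choosing $N>n$ makes the error term less than $1$, and some single set $K$ arises for infinitely many $N$. Letting $N\to\infty$ along those values gives $f(N_D^+[K])\ge\theta f(V(D))$ exactly for that $K$. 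Since $K$ is a quasi-kernel of $D$ for each such $N$ by the first step, this is property~\textup{($W_\theta$)}.
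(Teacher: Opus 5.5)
Your proof is correct and follows the paper's argument: you blow each vertex up into an independent set of size $Nf(x)+1$ and project the quasi-kernel to $K_N$. You then bound $|N_F^+[Q]|$ by $Nf(N_D^+[K_N])+n$ and let $N\to\infty$ along values for which the same $K$ recurs. The normalisation of $Q$ and the remark that the error term drops below $1$ for $N>n$ are unnecessary (the latter would not suffice on its own, since $\theta f(V(D))$ need not be an integer), but the limit argument alone closes the proof, so they do no harm.
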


\begin{proof}
Fix $D$, a weight function $f:V(D)\to\Znonneg$, and a positive
integer $M$. For every $v\in V(D)$, replace $v$ by an independent set
$B_v$ of size $Mf(v)+1$. Replace each original arc $v\to w$ by all arcs
from $B_v$ to $B_w$. Let $D_M$ be the resulting loopless digraph.
Every $B_v$ is nonempty, even when $f(v)=0$.

By the hypothesis applied to $D_M$, there is a quasi-kernel $Q_M$
with $|N_{D_M}^+[Q_M]|\ge\theta|V(D_M)|$. Put
\[
 K_M=\{v\in V(D):Q_M\cap B_v\ne\varnothing\}.
\]
An arc between two vertices of $K_M$ would induce an arc between
the corresponding selected copies in $Q_M$, so $K_M$ is independent.
For every $v\in V(D)$, a path of length at most two from $Q_M$ to
any vertex of $B_v$ projects to a directed walk of length at most
two from $K_M$ to $v$. Such a walk can be shortened to a path.
Consequently, $K_M$ is a quasi-kernel of $D$.

Write $A_M=N_D^+[K_M]$ and $n=|V(D)|$. Every vertex of
$N_{D_M}^+[Q_M]$ lies in $\bigcup_{v\in A_M}B_v$. Therefore
\[
 Mf(A_M)+|A_M|=\sum_{v\in A_M}|B_v|
   \ge |N_{D_M}^+[Q_M]|
   \ge\theta\bigl(Mf(V(D))+n\bigr).
\]
As $D$ has only finitely many subsets, there is a quasi-kernel $K$
such that $K_M=K$ for arbitrarily large values of $M$.
Dividing by $M$ and letting $M\to\infty$ along this sequence gives
$f(N_D^+[K])\ge\theta f(V(D))$, as required.
\end{proof}

\subsection{Exact-parameter equivalence}
\begin{proof}[Proof of Corollary~\ref{cor:equivalence}]
The implication (1)$\Rightarrow$(2) is due to
Spiro~\cite[Proposition~4.1]{Spiro}; see also
Ai, Liu and Peng~\cite[Proposition~4]{ALP}, with all arcs reversed
to match our convention.

Suppose now that (2) holds. By Lemma~\ref{lem:blowup}, every digraph
satisfies \textup{($W_\theta$)}. Applying Theorem~\ref{thm:transfer}
to each nonempty source-free digraph $D$ gives
$\gq(D)\le(1-\theta)|V(D)|$. The assertion is trivial for the empty
digraph, so (1) follows.
\end{proof}

This proves the converse with no loss in the parameter, answering
the question of Ai, Liu and Peng~\cite{ALP}. Note that the corollary
concerns universal assertions. The proof uses the unweighted neighbourhood assertion for every
auxiliary blow-up $D_M$, not merely for the original digraph $D$.

\subsection{Application to break digraphs}\label{subsec:break}
We apply the following theorem of Nguyen, Scott and
Seymour~\cite{NSS}, which allows nonnegative integer weights.

\begin{theorem}[Theorem 3.2 in \cite{NSS}]
\label{thm:nss}
Let $D$ be an oriented graph with a partition
$V(D)=S\mathbin{\dot\cup}T$ such that $D[S]$ is acyclic and $D[T]$
is a tournament. Then, for every $f:V(D)\to\Znonneg$, there is a
quasi-kernel $K$ satisfying
\[
 f\bigl(N_D^+[K]\bigr)\ge\tfrac12 f(V(D)).
\]
\end{theorem}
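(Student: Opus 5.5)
The plan is to reduce the weighted statement to a purely structural covering statement. We will show that every such oriented graph $D$ with partition $S\mathbin{\dot\cup}T$ has two quasi-kernels $K_1,K_2$ with
\[
 N_D^+[K_1]\cup N_D^+[K_2]=V(D).
\]
Once this is established, the theorem follows for every $f$, since
\[
 f\bigl(N_D^+[K_1]\bigr)+f\bigl(N_D^+[K_2]\bigr)\ge f(V(D)),
\]
so one of the two sets has weight at least $\tfrac12 f(V(D))$. This reduction is the cleanest route because it handles all weightings at once and no blow-up is needed. If the covering statement turns out to be false for some break digraph, the fallback is to run the argument below directly with weights, by induction on $|V(D)|+f(V(D))$.

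For the covering statement I would argue by induction on $|V(D)|$, in the spirit of the Chv\'atal--Lov\'asz construction. The key structural facts are the following.
\begin{itemize}
\item Every independent set meets $T$ in at most one vertex, since $D[T]$ is a tournament.
\item $D[T]$ has a vertex $t$ of maximum out-degree, and $t$ reaches all of $T$ within two steps, so $t$ is a ``king'' for $T$.
\item $D[S]$ is acyclic, so it has a vertex with no out-neighbours in $S$ (a sink of $D[S]$). It also has a topological order along which kernels of acyclic parts can be built greedily.
\end{itemize}

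The first quasi-kernel $K_1$ will be built as follows. Take $t$, delete $N_D^+[t]$, and handle the remaining vertices of $S$, which induce an acyclic digraph together with at most the in-neighbours of $t$ in $T$. On this remainder take the unique kernel $K'$ of the acyclic part, working backwards from sinks. Then set $K_1=\{t\}\cup K'$, after discarding the vertices of $K'$ dominated by $t$ or adjacent to it, in the usual Chv\'atal--Lov\'asz way, so that $K_1$ is independent and reaches everything within two steps.

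The second quasi-kernel $K_2$ will be chosen to cover $V(D)\setminus N_D^+[K_1]$ in one step. Every vertex $w$ outside $N_D^+[K_1]$ is reached from $K_1$ at distance exactly two. I would apply the induction hypothesis, or the Chv\'atal--Lov\'asz theorem, to the digraph $D'$ obtained by deleting $N_D^+[K_1]$. This $D'$ is again a break digraph with $T'=T\cap V(D')$. Its quasi-kernel $Q'$ is then extended to a quasi-kernel $K_2$ of $D$ by adding sinks of the residual acyclic part as needed. The intended invariant is that vertices of $D'$ are kept in $N^+[K_2]$ whenever they are not themselves in $K_2$, by choosing $Q'$ inside the set of vertices with no in-neighbour in $D'$ where possible.

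The main obstacle is this last step: guaranteeing that the vertices left uncovered by $N^+[K_1]$ are \emph{closed}-neighbourhood covered by an independent $K_2$ that is still a quasi-kernel of all of $D$. Arcs from $V(D')$ back into $N_D^+[K_1]$ can break independence, and at most one vertex of $K_2$ may lie in $T$. To handle this, I would first try to choose $t$ in $K_1$ extremally, maximizing $|N^+_D[t]|$ among kings of $T$, and then order $S$ topologically so that the uncovered set is absorbed by sinks of $D[S]$. Failing that, I would revert to the direct weighted induction. In that version one deletes a king $t$ of $T$ together with its out-neighbourhood, and transfers the weight of each deleted vertex onto its in-neighbours in the remainder, mimicking the parent assignment $p(x)$ used in Lemma~\ref{lem:mixed}. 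One then compares the two resulting candidates.
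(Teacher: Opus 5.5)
The paper gives no proof of this statement. It quotes it from Nguyen, Scott and Seymour and uses it as a black box, so your proposal has to stand on its own, and it does not. The structural statement you reduce to is false, already for tournaments, which are break digraphs with $S=\varnothing$. Let $D$ be the Paley tournament on $\mathbb{Z}_7$, with $i\to j$ iff $j-i\in\{1,2,4\}$. An independent set in a tournament has at most one vertex, so every quasi-kernel of $D$ is a singleton $\{t\}$, and $|N_D^+[t]|=4$. If $t_1\to t_2$, then $N_D^+[t_1]\cap N_D^+[t_2]$ consists of $t_2$ and the unique common out-neighbour of $t_1,t_2$ (this tournament is doubly regular). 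Hence $|N_D^+[t_1]\cup N_D^+[t_2]|=6<7$; for instance, $N_D^+[0]\cup N_D^+[1]=\{0,1,2,4\}\cup\{1,2,3,5\}$ misses $6$. So no two quasi-kernels of $D$ have closed out-neighbourhoods covering $V(D)$. The step you flagged as the main obstacle therefore cannot be fixed by an extremal choice of king or a clever topological order: it is impossible. Independently, your construction of $K_1$ is ill-posed. After deleting $N_D^+[t]$, the remainder still contains $T\setminus N_D^+[t]$, the in-neighbourhood of $t$ in the tournament $D[T]$, so there is no ``kernel of the acyclic part'' to take. Moreover, discarding vertices adjacent to $t$ can destroy two-step reachability.

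Your fallback is only a plan. It does not say how a quasi-kernel $Q'$ of $D-N_D^+[t]$ is combined with $t$ when $Q'$ contains an in-neighbour of $t$. In the Chv\'atal--Lov\'asz step one must then drop $t$, and $N_D^+(t)$, together with its weight, may leave the closed out-neighbourhood. Nor does it say why one of your two candidates captures half of $f(V(D))$. That weight accounting is the entire content of the theorem.

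The correct ``all weightings at once'' reformulation is fractional, not integral. By LP duality (the minimax theorem over the finitely many quasi-kernels), the statement is equivalent to the existence of a probability distribution on quasi-kernels under which every vertex lies in $N_D^+[K]$ with probability at least $\frac12$. In the example above, the uniform distribution gives $\frac47$. For an arbitrary tournament, an optimal mixed strategy of the tournament game does the job, and its support consists of kings. A proof has to produce such a distribution for break digraphs, or an equivalent weighted induction, coupling the acyclic part with the tournament part. Your proposal supplies neither.
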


\vspace{1mm}

\begin{proof}[Proof of Corollary~\ref{cor:break}]
Theorem~\ref{thm:nss} says precisely that every break digraph has
property~\textup{($W_{1/2}$)}. For a nonempty source-free break
digraph $D$, Theorem~\ref{thm:transfer} with $\theta=1/2$
yields a quasi-kernel of size at most $\lfloor|V(D)|/2\rfloor$.
\end{proof}

The bound is sharp. Indeed, the directed 4-cycle
$\overrightarrow{C_4}$ is a source-free break digraph and $\gamma_2(\vec C_4)=2$.




\begin{thebibliography}{99}
\bibitem{AGGYZ}
J.~Ai, S.~Gerke, G.~Gutin, A.~Yeo and Y.~Zhou,
Results on the small quasi-kernel conjecture,
\emph{Discrete Math.} \textbf{346}(7) (2023), 113435.

\bibitem{Break}
J.~Ai, T.~Huang, X.~Liu, C.~Tang and Z.~Xu,
\emph{Quasi-kernels in Hereditary Classes and Applications to Break Digraphs},
\href{https://arxiv.org/abs/2606.16571v2}{arXiv:2606.16571v2} (2026).

\bibitem{ALP}
J.~Ai, X.~Liu and F.~Peng,
A variable version of the quasi-kernel conjecture,
\emph{Electron. J. Combin.} \textbf{32}(2) (2025), P2.53.

\bibitem{Berge}
C.~Berge, Two theorems in graph theory,
\emph{Proc. Natl. Acad. Sci. USA} \textbf{43}(9) (1957), 842--844.

\bibitem{CL}
V.~Chv\'atal and L.~Lov\'asz,
Every directed graph has a semi-kernel,
in \emph{Hypergraph Seminar}, Lecture Notes in Mathematics 411,
Springer, 1974, p.~175.

\bibitem{Erdos}
P.~L.~Erd\H{o}s, Two conjectures on quasi-kernels,
in \emph{Fete of Combinatorics and Computer Science},
Bolyai Society Mathematical Studies 20, Springer, 2010,
pp.~357--358 (Open Problems, no.~4).

\bibitem{Survey}
P.~L.~Erd\H{o}s, E.~Gy\H{o}ri, T.~R.~Mezei, N.~Salia and M.~Tyomkyn,
On the small quasi-kernel conjecture---a survey,
in \emph{Information Theory and Related Fields},
Lecture Notes in Computer Science 14620, Springer, 2025,
pp.~98--110.

\bibitem{GKTY}
G.~Gutin, K.~M.~Koh, E.~G.~Tay and A.~Yeo,
On the number of quasi-kernels in digraphs,
\emph{J. Graph Theory} \textbf{46}(1) (2004), 48--56.

\bibitem{HH}
S.~Heard and J.~Huang,
Disjoint quasi-kernels in digraphs,
\emph{J. Graph Theory} \textbf{58}(3) (2008), 251--260.

\bibitem{KLS}
A.~V.~Kostochka, R.~Luo and S.~Shan,
Towards the small quasi-kernel conjecture,
\emph{Electron. J. Combin.} \textbf{29}(3) (2022), P3.49.

\bibitem{LMRVZ}
H.~Langlois, F.~Meunier, R.~Rizzi, S.~Vialette and Y.~Zhou,
Quasi-kernels in split graphs,
\emph{Discrete Appl. Math.} \textbf{361} (2025), 236--243.

\bibitem{NSS}
T.~Nguyen, A.~Scott and P.~Seymour,
Distant digraph domination,
\emph{Electron. J. Combin.} \textbf{33}(1) (2026), P1.32.
\href{https://doi.org/10.37236/13556}{doi:10.37236/13556}.

\bibitem{Spiro}
S.~Spiro, Generalized quasikernels in digraphs,
\emph{European J. Combin.} \textbf{133} (2026), 104307.
For the weighted/unweighted construction, see Appendix~A.1 of
\href{https://arxiv.org/abs/2404.07305v2}{arXiv:2404.07305v2}.

\bibitem{Hulst}
A.~van Hulst, \emph{Kernels and Small Quasi-Kernels in Digraphs},
\href{https://arxiv.org/abs/2110.00789}{arXiv:2110.00789} (2021).

\end{thebibliography}
\end{document}